\newcommand{\nm}{\noalign{\smallskip}}
\newcommand{\ds}{\displaystyle}
\newcommand{\R}{\mathbb{R}}
\newcommand{\N}{\mathbb{N}}
\newtheorem{remark}{Remark}[section]
\newtheorem{definition}{Definition}[section]
\newtheorem{lemma}{Lemma}[section]
\begin{document}

\title{Mathematical modelling of plasmonic strain sensors\thanks{\footnotesize
This work was supported in part by the Swiss National Science Foundation grant number
200021--172483.}}

\author{Habib Ammari\thanks{\footnotesize Department of Mathematics, 
ETH Z\"urich, 
R\"amistrasse 101, CH-8092 Z\"urich, Switzerland (habib.ammari@math.ethz.ch; alice.vanel@sam.math.ethz.ch).} \and Pierre Millien\thanks{\footnotesize  Institut Langevin, 1 Rue Jussieu, 75005 Paris, France (pierre.millien@espci.fr). }\and Alice L. Vanel\footnotemark[2] }

\date{}

\maketitle

\begin{abstract}
We provide a mathematical analysis for a metasurface constructed of plasmonic nanoparticles mounted periodically on the surface of a microcapsule. We derive an effective transmission condition, which exhibits resonances depending on the inter-particle distance. When the microcapsule is deformed, the resonances are shifted. 
We fully characterise the dependence of these resonances on the deformation of the microcapsule, enabling the detection of strains at the microscale level. 
We  present numerical simulations to validate our results.
\end{abstract}

\def\keywords2{\vspace{.5em}{\textbf{  Mathematics Subject Classification
(MSC2000).}~\,\relax}}
\def\endkeywords2{\par}
\keywords2{35R30, 35C20.}

\def\keywords{\vspace{.5em}{\textbf{ Keywords.}~\,\relax}}
\def\endkeywords{\par}
\keywords{plasmonic resonance, biomedical imaging, metasurface, strain sensing.}

\section{Introduction}
There is high value in early and real-time detection of deformation in materials. Most methods are invasive and do not allow for in-situ evaluation. In \cite{burel17}, gold nanoparticles were embedded on microcapsules subjected to uni-axial strain. Upon mechanical stress, the authors observed a change in color of the microcapsule. These experimental results motivate a mathematical modelling of the phenomenon. The aim of this paper is to use spectral analysis, building upon the works reported in \cite{abboud96} for non-resonating particles and \cite{metasurface_robin16} for a one-dimensional grating in the half-space, to derive a rigorous relation between the applied mechanical strain and the observed extinction shift.

Driven by the search of materials that achieve full control of wave propagation, the field of metamaterials has been undergoing considerable developments in the last decades. Metamaterials are artificial materials. Their building blocks are often locally resonant elements, whose features are an order of magnitude smaller than the operating wavelength. The properties, geometry, and size of the subwavelength resonant elements strongly alter the wave propagation in the structure. In this article, we show that the microcapsules synthesised in \cite{burel17} are in fact metasurfaces and that they owe their extraordinary sensing properties to the periodic arrangement of resonating gold nanoparticles embedded on their surface.

The desired optical effects are achieved by a phenomenon called surface plasmon resonance. Surface plasmon resonance occurs when the free electrons at the surface of a metal oscillate with a maximum amplitude. The surface plasmon resonance induces a strong absorption of the incident light by the nanoparticles. It is determined by a number of parameters: the nature of the metal, the dielectric properties of the background medium, the size, shape and configuration of the particles, among others; these allow a remarkably sophisticated degree of control over the desired optical response. The resonance is especially powerful and acute for noble metals, making gold a strong candidate for the sensitivity sensor. Another advantage of choosing gold is that its resonance occurs in the visible range of the electromagnetic spectrum [$380-740$nm], making the changes visible to the naked eye. In \cite{burel17}, the microcapsules have dimensions $4-30\upmu$m, and the nanoparticles have mean diameters $40$ nm $\pm 25$ nm; the resonators are indeed subwavelength. In \cite{ammari2017mathematicalscalar, ammari2016plasmaxwell,ando2016analysis}, the plasmonic resonances of a single particle are characterised in terms of the spectrum to some integral operator, known as the Neumann-Poincar\'e operator. 

In this paper, we mathematically formulate the scattering problem and derive an effective transmission condition using layer potential techniques (see Definition \ref{defNP}). We show that the absorption properties of the metasurface are associated with the eigenvalues of a periodic Neumann-Poincar\'e type operator. The thinness of the layer allows us to employ homogenisation techniques and effectively replace the plasmonic particles by an approximate transmission condition. In contrast with quasi-static plasmonic resonances of single nanoparticles, the quasi-static plasmonic resonances of periodically arranged nanoparticles depend on their size and configuration. We exploit well-known results on the periodic Green function and on the spectrum of its associated Neumann-Poincar\'e operator to fully characterise the resonances in terms of the structure's periodicity.

For simplicity, we reduce the problem to two dimensions where the microcapsule is a disk in the $\R^2$-plane and the nanoparticles are equally spaced disks mounted on its perimeter, see Figure \ref{fig:setting_capsule}. We assume that the nanoparticles stay equally spaced on the elongated ellipse. We model the propagation of light with the scalar wave equation and illuminate with a plane wave.

The rest of the paper is structured as follows: We begin in section \ref{sec:setting} by formulating the problem setting; in section \ref{sec:boundary_limit} we solve a hierarchy of equations from which we derive the effective transmission condition; next, we fully characterise the transmission condition in terms of the periodicity in section \ref{sec:strain}, and validate it against numerical simulations; we conclude in section \ref{sec:con} with a discussion of our results, generalization to three dimensions, and other future directions.

\begin{figure}
\centering
\includegraphics[trim={1.5cm 10.5cm 7cm 5.5cm},clip,width=15cm]{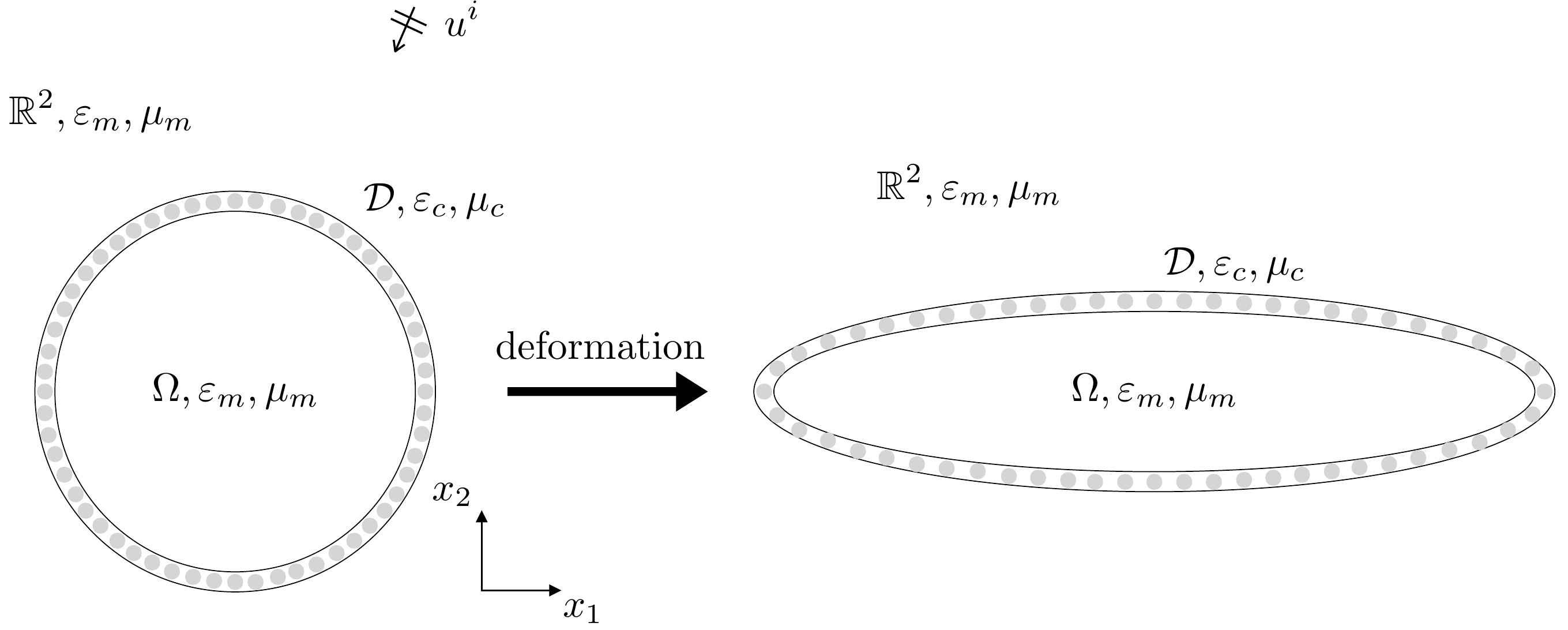}
  \caption{Not-to-scale schematic of the microcapsule before and after deformation.}
  \label{fig:setting_capsule}
\end{figure}

\section{Problem setting} \label{sec:setting}
We consider a particle $D$ occupying a smooth bounded domain in $\mathbb{R}^2$, of class $\mathcal{C}^{1,\alpha}$ for some $\alpha>0$, characterised by electric permittivity $\varepsilon_c$ and magnetic permeability $\mu_c$, both of which may depend on $\omega$, the frequency of the incoming wave. The particle $D$ has a characteristic size $\delta$ small compared to the operating wavelength. For ease of notation, we will write $\delta \ll 1$ in what follows instead of the correct homogeneous approximation $\delta \omega/c\ll1$, where $c$ is the speed of light in the medium. The background medium is characterised by its electric permittivity $\varepsilon_m$ and its magnetic permeability $\mu_m$. Throughout this paper, we assume that $\varepsilon_m$ and $\mu_m$ are real and positive and that $k_m$ is of order one. We also assume that $\Im{\varepsilon_c} \leq 0,\Re{\mu_c} \leq 0$ and $\Im{\mu_c} \leq 0$. We define the wavenumbers $k_c=\omega\sqrt{\varepsilon_c\mu_c}$ and $k_m=\omega\sqrt{\varepsilon_m\mu_m}$.

The particle is repeated periodically on the perimeter of a disk $\Omega$ of radius $r$ centred at the origin representing the microcapsule. Let $N$ be the number of particles and $\mathcal{D}=\cup_{l=1}^N D_l$ be the collection of periodically arranged particles. We assume the particles to be placed on the roots of unity, so their centres have coordinates in the complex plane $z_l=r\exp{(2i\pi l/N)}$, for $l=0,\ldots,N-1$, see Figure~\ref{fig:setting_capsule}. Let $\varepsilon=\varepsilon_c \chi(\mathcal{D})+\varepsilon_m \chi(\mathbb{R}^2 \setminus \bar{\mathcal{D}})$ and $\mu=\mu_c \chi(\mathcal{D})+\mu_m \chi(\mathbb{R}^2 \setminus \bar{\mathcal{D}})$, where $\chi$ denotes the characteristic function. Let $u^i(x)=\exp{(ik_m\kappa\cdot x)}$ be the incident wave, where $\kappa$ is the unit incidence direction.

Upon mechanical stress, the elastic circular microcapsule deforms into an ellipse. We choose the medium to be water, which is incompressible. For the microcapsule's surface to be conserved, its perimeter has to increase, which in turn increases, on average, the inter-particle distance. Not-to-scale circular and elliptic microcapsules are sketched in Figure~\ref{fig:setting_capsule}. In our approximation, the layer of particles stays periodic under the stretch and the period increases. Moreover, we consider the elastic layer on which the nanoparticles are mounted to be infinitely thin.

We use the Helmholtz equation to model the propagation of light. The total potential $u$ satisfies the following equations
\begin{equation}\label{eq:set-up}
\begin{dcases}
\nabla \cdot \left(\frac{1}{\mu} \nabla u\right) +\omega^2 \varepsilon u=0 & \mbox{in } \R^2, \\
\left. u\right|_+=\left.u\right|_- & \mbox{on } \partial \mathcal{D}, \\
\left. \frac{1}{\mu_m} \frac{\partial u}{\partial \nu}\right|_+ = \left. \frac{1}{\mu_c} \frac{\partial u}{\partial \nu}\right|_- & \mbox{on } \partial \mathcal{D},\\
\end{dcases}
\end{equation}
as well as the outgoing radiation condition
\begin{equation} \label{radiation}
\left| \frac{\partial(u-u^i)}{\partial |x|}-ik_m(u-u^i)\right|=\mathcal{O}\left(|x|^{-3/2}\right) \qquad  \mbox{as }  |x|\rightarrow \infty.
\end{equation}
Here, $\partial \cdot/\partial \nu$ denotes the normal derivative and the subscripts $+$ and $-$ are used to denote evaluation from outside and inside $\partial \mathcal{D}$, respectively.

It was shown in \cite{abboud96} that the curvature of the microcapsule does not appear at the first order approximation in $\delta$ of the periodic particles. So the approximate transmission conditions will be determined by studying the infinite periodic one-dimensional grating shown in Figure~\ref{fig:setting_line}(a), where now $\mathcal{D}=\cup_{l=1}^\infty D_l$. When considering the boundary layer, we use the scaled coordinates $\xi=(\xi_1,\xi_2)$,  as shown in panel (b) of the same figure, and denote by $d/\delta$ the inter-particle distance. The particles are repeated along the $\xi_1$-axis. We assume $d$ and $\delta$ to be of the same order of magnitude.

\begin{figure}
\centering
\includegraphics[trim={4cm 9cm 3.5cm 8.5cm},clip,width=15cm]{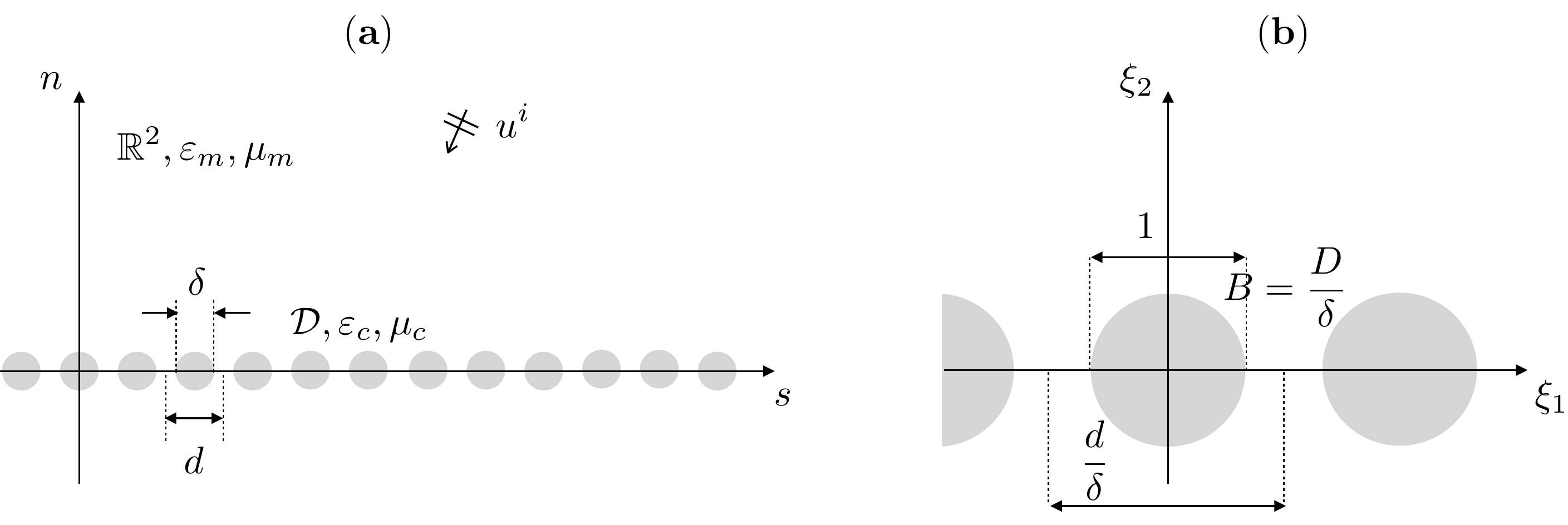}
 \caption{Plane grating of periodically arranged nanoparticles on (a), on (b) after stretched coordinates.}
\label{fig:setting_line}
\end{figure}

\section{Boundary layer approximation}
\label{sec:boundary_limit}
Assume that the capsule $\Omega$ is of class $\mathcal{C}^1$. For $x$ in a neighborhood of $\partial \Omega$, let $s$ be the curvilinear abscissa of the orthogonal projection of $x$ on $\partial \Omega$ and let $-n$ be the signed distance from $\Omega$. 
Following~\cite{abboud96}, for $x$ in a neighbourhood of $\partial \Omega$, we introduce the ansatz
\begin{equation} \label{eq:ansatz}
u(x)=u^{(0)}(x) +u^{(0)}_\text{BL}\left(\frac{s}{\delta}, \frac{n}{\delta}\right) +\delta\left(u^{(1)} (x) +u^{(1)}_\text{BL} \left(\frac{s}{\delta}, \frac{n}{\delta}\right) \right)+\mathcal{O}\left(\delta^2\right).
\end{equation}
Both $u^{(0)}$ and $u^{(1)}$ are solutions to a Helmholtz equation and do not satisfy boundary conditions on the plasmonic particles. The boundary-layer correctors $u^{(0)}_\text{BL}$ and $u^{(1)}_\text{BL}$ are introduced to correct the transmission conditions on the particles boundary, and are exponentially decaying as $|\xi_2|$  ($\xi_2 = n/\delta$) goes to infinity.
Note that the convergence of \eqref{eq:ansatz} was proved in \cite{abboud96} for the half-plane setting with Dirichlet boundary conditions.
The same arguments as those in \cite{abboud96} apply here.

By substituting the asymptotic expansion \eqref{eq:ansatz} into \eqref{eq:set-up} we find that the leading-order term $u^{(0)}$ solves 
\begin{equation}\label{eq:u^(0)}
\begin{dcases}
\Delta u^{(0)} +k_m^2 u^{(0)}=0 & \mbox{in } \R^2 \setminus \overline{\Omega} \mbox{ and } \Omega, \\
u^{(0)}-u^i  \mbox{ satisfies the outgoing radiation condition (\ref{radiation}) as } |x|\rightarrow \infty.
\end{dcases}
\end{equation}
The leading-order boundary-layer term $u^{(0)}_\text{BL}$ corrects the transmission conditions on $\partial \mathcal{D}$ up to order $\mathcal{O}(\delta)$ , and hence it solves
\begin{equation}\label{eq:u^(0)_BL}
\begin{dcases}
\nabla \cdot \left(\frac{1}{\mu} \nabla u^{(0)}_\text{BL} \right)+\omega^2\varepsilon u^{(0)}_\text{BL}=0 & \mbox{in } \left(\mathbb{R}^2\setminus\overline{\mathcal{D}}\right)\cup\mathcal{D}, \\
\left.  u^{(0)}_\text{BL}\right|_+=\left.  u^{(0)}_\text{BL} \right|_- & \mbox{on } \partial \mathcal{D}, \\
\left. \frac{1}{\mu_m} \frac{\partial  u^{(0)}_\text{BL}}{\partial \nu}\right|_+ - \left. \frac{1}{\mu_c} \frac{\partial  u^{(0)}_\text{BL}}{\partial \nu}\right|_-=\left(\frac{1}{\mu_c}-\frac{1}{\mu_m}\right)\frac{\partial  u^{(0)}}{\partial \nu} & \mbox{on } \partial \mathcal{D},\\
 u^{(0)}_\text{BL}  \text{ is exponentially decaying away from } \partial \Omega.
\end{dcases}
\end{equation}

We consider now a re-scaled problem where a particle occupies a bounded domain $B=D/\delta$ and is repeated periodically on the $\xi_1$-axis with period $d/\delta$, see Figure~\ref{fig:setting_line}(b). We denote by $\mathcal{B}$ the collection of these re-sized particles. We introduce two functions $\alpha^{(1)}$ and $\alpha^{(2)}$ and four complex constants $\alpha^{(1),+}_\infty,\alpha^{(1),-}_\infty,\alpha^{(2),+}_\infty,\alpha^{(2),-}_\infty$ that satisfy  in the variable $\xi=(\xi_1,\xi_2)$ and for $l=1,2$,
\begin{equation}\label{eq:alpha}
\begin{dcases}
\Delta \alpha^{(l)} =0 & \mbox{in } \left(\mathbb{R}^2\setminus\overline{\mathcal{B}}\right)\cup\mathcal{B}, \\
\left. \alpha^{(l)}\right|_+=\left.\alpha^{(l)}\right|_- & \mbox{on } \partial \mathcal{B}, \\
\left. \frac{1}{\mu_m} \frac{\partial \alpha^{(l)}}{\partial \nu}\right|_+ - \left. \frac{1}{\mu_c} \frac{\partial \alpha^{(l)}}{\partial \nu}\right|_-=\left(\frac{1}{\mu_c}-\frac{1}{\mu_m}\right)\nu_l & \mbox{on } \partial \mathcal{B},\\
\alpha^{(l)}-\alpha^{(l),+}_\infty \mbox{ is exponentially decaying as } \xi_2\rightarrow+\infty, \\
\alpha^{(l)}-\alpha^{(l),-}_\infty \mbox{ is exponentially decaying as } \xi_2\rightarrow-\infty.
\end{dcases}
\end{equation}
Then $u^{(0)}_\text{BL}$ defined by
\begin{equation*}\label{eq:u^(0)_BL_sol}
u^{(0)}_\text{BL}(x):=
\begin{dcases}
\delta\left[ \frac{\partial u^{(0)}}{\partial x_1}(x_1,0)\left( \alpha^{(1)}\left(\frac{x}{\delta}\right)- \alpha^{(1),+}_\infty \right)+ \frac{\partial u^{(0)}}{\partial x_2}(x_1,0)\left( \alpha^{(2)}\left(\frac{x}{\delta}\right)- \alpha^{(2),+}_\infty \right)\right] & \mbox{for } n \geq0, \\
\delta\left[ \frac{\partial u^{(0)}}{\partial x_1}(x_1,0)\left( \alpha^{(1)}\left(\frac{x}{\delta}\right)- \alpha^{(1),-}_\infty \right)+ \frac{\partial u^{(0)}}{\partial x_2}(x_1,0)\left( \alpha^{(2)}\left(\frac{x}{\delta}\right)- \alpha^{(2),-}_\infty \right)\right] & \mbox{for } n<0, \\
\end{dcases}
\end{equation*}
solves \eqref{eq:u^(0)_BL} up to order $\mathcal{O}(\delta^2)$.

\begin{lemma}
Using the periodic single-layer potential $\mathcal{S}_{B,\sharp }$ and the periodic Neumann-Poincar\'e operator $\mathcal{K}_{B,\sharp }^*$ defined in Appendix \ref{app:SLP}, we can write the solutions to \eqref{eq:alpha} as
\begin{equation}
\alpha^{(l)}=\mathcal{S}_{B,\sharp }\left(\lambda I-\mathcal{K}_{B,\sharp }^*\right)^{-1}\left[\nu_l\right] , \qquad l=1,2,
\end{equation}
where $I$ denotes the identity operator and the contrast $\lambda$ is given by
\begin{equation}
\lambda=\frac{\mu_m+\mu_c}{2(\mu_m-\mu_c)}.
\end{equation}
\end{lemma}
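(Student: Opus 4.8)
The plan is to represent each $\alpha^{(l)}$ by a periodic single-layer potential and thereby reduce the transmission problem \eqref{eq:alpha} to an integral equation on the boundary of one particle. First I would make the ansatz $\alpha^{(l)} = \mathcal{S}_{B,\sharp}[\phi^{(l)}]$ for an unknown density $\phi^{(l)}$ on $\partial \mathcal{B}$. By the mapping properties of the periodic single-layer potential recalled in Appendix~\ref{app:SLP}, $\mathcal{S}_{B,\sharp}[\phi^{(l)}]$ is harmonic in $\left(\R^2\setminus\overline{\mathcal{B}}\right)\cup\mathcal{B}$ and continuous across $\partial \mathcal{B}$, so the first two lines of \eqref{eq:alpha} are satisfied automatically for every choice of $\phi^{(l)}$.

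The core step is to encode the transmission condition. I would insert the jump relations $\left.\partial_\nu \mathcal{S}_{B,\sharp}[\phi^{(l)}]\right|_\pm = \left(\pm\tfrac{1}{2}I + \mathcal{K}_{B,\sharp}^*\right)[\phi^{(l)}]$ into the third line of \eqref{eq:alpha}. Collecting terms yields
\begin{equation*}
\left[\tfrac{1}{2}\left(\tfrac{1}{\mu_m}+\tfrac{1}{\mu_c}\right)I + \left(\tfrac{1}{\mu_m}-\tfrac{1}{\mu_c}\right)\mathcal{K}_{B,\sharp}^*\right][\phi^{(l)}] = \left(\tfrac{1}{\mu_c}-\tfrac{1}{\mu_m}\right)\nu_l.
\end{equation*}
Dividing by the nonzero factor $\tfrac{1}{\mu_m}-\tfrac{1}{\mu_c}$ and simplifying the scalar coefficient to $-2\lambda$ with $\lambda = \tfrac{\mu_m+\mu_c}{2(\mu_m-\mu_c)}$ reduces the equation to $\left(\lambda I - \mathcal{K}_{B,\sharp}^*\right)[\phi^{(l)}] = \nu_l$. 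Provided $\lambda$ is not an eigenvalue of $\mathcal{K}_{B,\sharp}^*$ (the complementary situation is precisely the resonant regime studied later), I invert to obtain $\phi^{(l)} = \left(\lambda I - \mathcal{K}_{B,\sharp}^*\right)^{-1}[\nu_l]$, which gives the claimed representation.

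What remains, and where I expect the main obstacle, is the behaviour at infinity in the last two lines of \eqref{eq:alpha}. Unlike the single-particle case, the periodic Green function underlying $\mathcal{S}_{B,\sharp}$ is unbounded: its zeroth Fourier mode is proportional to $|\xi_2|$, so a priori $\mathcal{S}_{B,\sharp}[\phi^{(l)}]$ could grow linearly as $\xi_2 \to \pm\infty$. I would show that this linear part is controlled entirely by $\int_{\partial B}\phi^{(l)}\,d\sigma$ and that this average in fact vanishes. To see this, integrate the integral equation over one period: since $\int_{\partial B}\nu_l\,d\sigma = 0$ by the divergence theorem and $\int_{\partial B}\mathcal{K}_{B,\sharp}^*[\phi]\,d\sigma = \tfrac{1}{2}\int_{\partial B}\phi\,d\sigma$, one gets $\left(\lambda-\tfrac{1}{2}\right)\int_{\partial B}\phi^{(l)}\,d\sigma = 0$, forcing $\int_{\partial B}\phi^{(l)}\,d\sigma = 0$. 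Once the linear term is removed, the exponentially decaying Fourier modes of the Green function guarantee that $\alpha^{(l)}$ tends exponentially to the constants $\alpha^{(l),\pm}_\infty$, which are read off as the surviving constant contributions of the single-layer potential as $\xi_2 \to \pm\infty$. Uniqueness of the representation then follows from the invertibility already established.
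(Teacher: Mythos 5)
Your proposal is correct and follows essentially the same route as the paper: a periodic single-layer ansatz, harmonicity and continuity from the properties of $\mathcal{S}_{B,\sharp}$, and the jump relations for $\partial_\nu \mathcal{S}_{B,\sharp}$ turning the normal-derivative transmission condition into $\left(\lambda I-\mathcal{K}_{B,\sharp}^*\right)[\phi^{(l)}]=\nu_l$. The only difference is that you additionally verify the decay at infinity via $\int_{\partial B}\phi^{(l)}\,\mathrm{d}\sigma=0$, a point the paper defers to (and proves identically in) the subsequent lemma on $\alpha^{(l),\pm}_\infty$.
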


\begin{proof}
We search for densities $\Psi_l \in H^{-\frac{1}{2}}(\partial B)$ such that $\alpha^{(l)}=\mathcal{S}_{B,\sharp }[\Psi_l]$. From Lemma \ref{lem:symmetrization}, the periodic single-layer potential is harmonic in $\left(\mathbb{R}^2\setminus\overline{\mathcal{B}}\right)\cup\mathcal{B}$ and continuous across $\partial \mathcal{B}$ and so, we are left with the normal derivative jump condition.
\end{proof}
 
\begin{lemma}
The following expansions hold for $l=1,2$, $\xi=(\xi_1,\xi_2)$, 
\begin{align*}
& \alpha^{(l)}(\xi)=\alpha^{(l),+}_\infty+\mathcal{O}(\exp({-\xi_2})) &\mbox{as } \xi_2\rightarrow +\infty, \\
& \alpha^{(l)}(\xi)=\alpha^{(l),-}_\infty+\mathcal{O}(\exp({\xi_2})) &\mbox{as } \xi_2\rightarrow -\infty,\\
\end{align*}
with
\begin{align}
\alpha^{(1),+}_\infty&=-\alpha^{(1),-}_\infty=0,\\
\alpha^{(2),+}_\infty&=-\alpha^{(2),-}_\infty=-\frac{\delta}{2d}\sum_{j=1}^\infty \frac{\left\langle\phi_j,\nu_2\right\rangle_{\mathcal{H}_0^*(\partial B)}\left\langle\phi_j,\nu_2\right\rangle_{\mathcal{H}_0^*(\partial B)}}{(\lambda-\lambda_j)(\frac{1}{2}-\lambda_j)}, \label{eq:alpha2inf}
\end{align}
where $\{\lambda_j\}$ are the eigenvalues of $\mathcal{K}^*_{B,\sharp }$ and $\{\phi_j\}$ a corresponding orthonormal basis of eigenvectors.
\end{lemma}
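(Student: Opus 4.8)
The plan is to start from the representation $\alpha^{(l)}=\mathcal{S}_{B,\sharp}[\psi_l]$ with $\psi_l:=(\lambda I-\mathcal{K}^*_{B,\sharp})^{-1}[\nu_l]$ supplied by the previous lemma, and to read the behaviour as $\xi_2\to\pm\infty$ directly off the far-field expansion of the periodic Green function. First I would record that, for the period $L=d/\delta$, the periodic Green function satisfies $G_\sharp(\xi)=\tfrac{1}{2L}|\xi_2|+c_0+\mathcal{O}(e^{-2\pi|\xi_2|/L})$ as $|\xi_2|\to\infty$, the exponentially small remainder carrying all the $\xi_1$-dependence (this is read off from the product/Fourier representation of $G_\sharp$ collected in Appendix \ref{app:SLP}). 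Since $\nu_l$ has zero mean on $\partial B$ and $\mathcal{K}^*_{B,\sharp}$ acts on $\mathcal{H}_0^*(\partial B)$, the density $\psi_l$ is also mean-zero, so the linear-in-$\xi_2$ term drops and $\mathcal{S}_{B,\sharp}[\psi_l](\xi)=\mp\tfrac{1}{2L}\int_{\partial B}y_2\,\psi_l(y)\,d\sigma(y)+\mathcal{O}(e^{-2\pi|\xi_2|/L})$ as $\xi_2\to\pm\infty$. This already gives the claimed form of the expansions and the antisymmetry $\alpha^{(l),+}_\infty=-\alpha^{(l),-}_\infty=-\tfrac{\delta}{2d}\int_{\partial B}y_2\,\psi_l\,d\sigma$.

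The core of the argument is then to evaluate $\int_{\partial B}y_2\,\psi_l\,d\sigma$ spectrally. Expanding $\psi_l=\sum_j(\lambda-\lambda_j)^{-1}\langle\nu_l,\phi_j\rangle_{\mathcal{H}_0^*(\partial B)}\,\phi_j$, I am reduced to computing $\int_{\partial B}y_2\,\phi_j\,d\sigma$ for each eigenfunction. The identity that produces the factor $(\tfrac12-\lambda_j)$ comes from Green's second identity applied on the single particle $B$ to the harmonic functions $\xi_2$ and $\mathcal{S}_{B,\sharp}[\phi_j]$: since both are harmonic in $B$, the bulk term vanishes, and using $\left.\tfrac{\partial\mathcal{S}_{B,\sharp}[\phi_j]}{\partial\nu}\right|_-=(-\tfrac12 I+\mathcal{K}^*_{B,\sharp})[\phi_j]=(\lambda_j-\tfrac12)\phi_j$ together with $\tfrac{\partial\xi_2}{\partial\nu}=\nu_2$ gives $(\lambda_j-\tfrac12)\int_{\partial B}y_2\,\phi_j\,d\sigma=\int_{\partial B}\nu_2\,\mathcal{S}_{B,\sharp}[\phi_j]\,d\sigma$. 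Rewriting the right-hand side through the defining relation $\langle\cdot,\cdot\rangle_{\mathcal{H}_0^*(\partial B)}=-\langle\cdot,\mathcal{S}_{B,\sharp}[\cdot]\rangle$ and the $L^2(\partial B)$-self-adjointness of $\mathcal{S}_{B,\sharp}$ yields $\int_{\partial B}y_2\,\phi_j\,d\sigma=\langle\nu_2,\phi_j\rangle_{\mathcal{H}_0^*(\partial B)}/(\tfrac12-\lambda_j)$. Crucially this step uses only the interior Green identity on the compact set $B$, so no contribution from the lateral cell boundaries or from $\xi_2\to\pm\infty$ enters.

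Substituting this identity into the spectral expansion gives, for $l=2$, the value $\int_{\partial B}y_2\,\psi_2\,d\sigma=\sum_j\langle\nu_2,\phi_j\rangle_{\mathcal{H}_0^*(\partial B)}\langle\nu_2,\phi_j\rangle_{\mathcal{H}_0^*(\partial B)}/[(\lambda-\lambda_j)(\tfrac12-\lambda_j)]$, whence formula \eqref{eq:alpha2inf} with prefactor $-\delta/(2d)$. For $l=1$ I would close the argument by symmetry rather than by cancellation in the sum: the disk and the lattice are invariant under the reflection $(\xi_1,\xi_2)\mapsto(\xi_1,-\xi_2)$, under which $\nu_1$ is even and $\nu_2$ odd; hence the forcing $\propto\nu_1$ makes $\alpha^{(1)}$ even in $\xi_2$, forcing $\alpha^{(1),+}_\infty=\alpha^{(1),-}_\infty$, which together with $\alpha^{(1),+}_\infty=-\alpha^{(1),-}_\infty$ gives $\alpha^{(1),+}_\infty=\alpha^{(1),-}_\infty=0$. (Equivalently, each $\phi_j$ has a definite parity in $\xi_2$, so $\langle\nu_1,\phi_j\rangle_{\mathcal{H}_0^*}\langle\nu_2,\phi_j\rangle_{\mathcal{H}_0^*}=0$ term by term.)

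The step I expect to require the most care is the far-field expansion of $\mathcal{S}_{B,\sharp}$: one must justify that the remainder is genuinely exponentially decaying, pin down the coefficient $\tfrac{1}{2L}=\tfrac{\delta}{2d}$ of the linear term, and verify that the additive constant $c_0$ is common to both half-planes, so that it is annihilated by the zero-mean condition $\int_{\partial B}\psi_l\,d\sigma=0$ and does not spoil the antisymmetry $\alpha^{(l),+}_\infty=-\alpha^{(l),-}_\infty$. Once this expansion is established, the remaining manipulations are the routine spectral substitution and the interior Green identity described above.
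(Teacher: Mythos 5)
Your proof is correct and follows essentially the same route as the paper: the far-field expansion of $G_\sharp$ combined with the zero mean of the density $(\lambda I-\mathcal{K}^*_{B,\sharp})^{-1}[\nu_l]$, the spectral expansion in the eigenbasis of $\mathcal{K}^*_{B,\sharp}$, an integration by parts (Green's identity on $B$) to produce the factor $(\tfrac12-\lambda_j)$, and a symmetry argument for $l=1$. If anything your write-up is slightly more careful than the paper's: you make the Green's-identity step explicit where the paper only says ``by integration by parts,'' and your reflection $(\xi_1,\xi_2)\mapsto(\xi_1,-\xi_2)$, under which $\nu_1$ is even and $\zeta_2$ odd, is the correct reading of the paper's change of variable $\zeta'=-\zeta$ (as literally written, central reflection would give $\nu_1(\zeta')=-\nu_1(\zeta)$ and the cancellation would fail).
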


\begin{proof} We use the expansions of the periodic Green's function derived in Lemma \ref{lem:G_hash}. As $\xi_2\rightarrow +\infty$, we have by definition
\begin{align*}
\alpha^{(l)}(\xi)&=\int_{\partial B} G_\sharp (\xi,\zeta)\left(\lambda I-\mathcal{K}_{B,\sharp }^*\right)^{-1}\left[\nu_l\right](\zeta)\mathrm{d}\sigma(\zeta),\\
&=\int_{\partial B} \left(\frac{\delta(\xi_2-\zeta_2)}{2d}-\frac{\ln{2}}{2\pi}\right)\left(\lambda I-\mathcal{K}_{B,\sharp }^*\right)^{-1}\left[\nu_l\right](\zeta)\mathrm{d}\sigma(\zeta)+\mathcal{O}\left(\exp({-\xi_2})\right),\\
&=-\frac{\delta}{2d}\int_{\partial B} \zeta_2\left(\lambda I-\mathcal{K}_{B,\sharp }^*\right)^{-1}\left[\nu_l\right](\zeta)\mathrm{d}\sigma(\zeta)+\mathcal{O}\left(\exp({-\xi_2})\right),\\
&=-\frac{\delta}{2d}\int_{\partial B} \zeta_2 \sum_{j=0}^\infty \frac{\left\langle\phi_j,\nu_l\right\rangle_{\mathcal{H}_0^*(\partial B)}}{\lambda-\lambda_j}\phi_j(\zeta)\mathrm{d}\sigma(\zeta)+\mathcal{O}\left(\exp({-\xi_2}) \right),\\
&=-\frac{\delta}{2d}\sum_{j=1}^\infty \frac{\left\langle\phi_j,\nu_l\right\rangle_{\mathcal{H}_0^*(\partial B)}\left\langle\phi_j,\zeta_2\right\rangle_{-1/2,1/2}}{\lambda-\lambda_j}+\mathcal{O}\left(\exp({-\xi_2}) \right),\\
&=-\frac{\delta}{2d}\sum_{j=1}^\infty \frac{\left\langle\phi_j,\nu_l\right\rangle_{\mathcal{H}_0^*(\partial B)}\left\langle\phi_j,\nu_2\right\rangle_{\mathcal{H}_0^*(\partial B)}}{(\lambda-\lambda_j)(\frac{1}{2}-\lambda_j)}+\mathcal{O}\left(\exp({-\xi_2})\right),
\end{align*}
where we used $\left\langle\phi_0,\nu_l\right\rangle_{\mathcal{H}_0^*(\partial B)}=0$ and
$$\int_{\partial B}\left(\lambda I-\mathcal{K}_{B,\sharp }^*\right)^{-1}\left[\nu_l\right](\zeta)\mathrm{d}\sigma(\zeta)=0.$$
Indeed, let $\Psi=\left(\lambda I-\mathcal{K}_{B,\sharp }^*\right)^{-1}\left[\nu_l\right]$. Since $\int_{\partial B}\nu_l \, \mathrm{d}\sigma=0$, we have $$\int_{\partial B}\left(\lambda I-\mathcal{K}_{B,\sharp }^*\right)\left[\Psi\right]\, \mathrm{d}\sigma=\int_{\partial B}\Psi\left(\lambda I-\mathcal{K}_{B,\sharp }\right)\left[1\right]\, \mathrm{d}\sigma =\left(\lambda-\frac{1}{2}\right)\int_{\partial B} \Psi =0\, \mathrm{d}\sigma.$$  Finally, $\lambda\neq1/2$, and so $\int_{\partial B} \Psi \, \mathrm{d}\sigma =0$. The last equality follows from
$$(\frac{1}{2}-\lambda_j)\left\langle\phi_j,\zeta_2\right\rangle_{-1/2,1/2}=\left\langle\phi_j,\nu_2\right\rangle_{\mathcal{H}_0^*(\partial B)}, $$ which is obtained by integration by parts.
We prove that $\alpha^{(1),+}_\infty=0$ by symmetry; indeed we have
\begin{align*}
\alpha^{(1),+}_\infty&=-\frac{\delta}{2d}\int_{\partial B} \zeta_2\left(\lambda I-\mathcal{K}_{B,\sharp }^*\right)^{-1}\left[\nu_1\right](\zeta)\mathrm{d}\sigma(\zeta),\\
&=-\frac{\delta}{2d}\left(\int_{\partial B^+} \zeta_2\left(\lambda I-\mathcal{K}_{B,\sharp }^*\right)^{-1}\left[\nu_1\right](\zeta)\mathrm{d}\sigma(\zeta)+\int_{\partial B^-} \zeta_2\left(\lambda I-\mathcal{K}_{B,\sharp }^*\right)^{-1}\left[\nu_1\right](\zeta)\mathrm{d}\sigma(\zeta)\right),
\end{align*}
where we split the boundary integral into an upper and lower half-space, on $\partial B^+$ and $\partial B^-$, respectively. A change of variable $\zeta'=-\zeta$ in the second integral gives $\zeta_2'=-\zeta_2$ and $\nu_1(\zeta')=\nu_1(\zeta)$ and hence,  the sum vanishes.

The proof for $\xi_2\rightarrow-\infty$ follows the same steps.
\end{proof}

Since there is no jump of $u$ across $\partial \Omega$,  $u^{(1)}$ must correct the jump of $u^{(0)}_\text{BL}$. Hence, the first-order term $u^{(1)}$ solves
\begin{equation}\label{eq:u^(1)}
\begin{dcases}
\Delta u^{(1)} +k_m^2 u^{(1)}=0 & \mbox{in } \R^2 \setminus \overline{\Omega} \mbox{ and } \Omega, \\
\nm
\left. \frac{\partial u^{(1)}}{\partial \nu} \right|_+ = \left. \frac{\partial u^{(1)}}{\partial \nu} \right|_- & \mbox{on } \partial \Omega, \\
\nm
\left.u^{(1)}\right|_{+}-\left.u^{(1)}\right|_{-}=-2\alpha^{(2),+}_\infty \frac{\partial u^{(0)}}{\partial \nu} & \mbox{on } \partial \Omega, \\
u^{(1)}-u^i \mbox{ satisfies the outgoing radiation condition (\ref{radiation}) at infinity.}
\end{dcases}
\end{equation}

\section{Effective transmission condition and strain sensing}
\label{sec:strain}
\subsection{Effective transmission condition}
By writing $u_\text{app}:=u^{(0)}+\delta u^{(1)}$ we find $u_\text{app}$ to be the solution of
\begin{equation}\label{eq:u_app}
\begin{dcases}
\Delta u_\text{app} +k_m^2 u_\text{app}=0 & \mbox{in } \R^2\setminus \overline{\Omega} \mbox{ and } \Omega, \\
\nm
\left.  \frac{\partial u_\text{app}}{\partial \nu} \right|_+ =  \left. \frac{\partial u_\text{app}}{\partial \nu} \right|_- & 
\mbox{on } \partial \Omega, \\
\nm
\left.u_\text{app}\right|_{+}-\left.u_\text{app}\right|_{-}=-2\delta\alpha^{(2),+}_\infty \frac{\partial u_\text{app}}{\partial \nu} & 
\mbox{on } \partial \Omega, \\
u_\text{app}-u^i \mbox{ satisfies the outgoing radiation condition (\ref{radiation}) at infinity.}
\end{dcases}
\end{equation}
We have derived an effective transmission condition on $\partial \Omega$, which is proportional to $\alpha^{(2),+}_\infty$. From  \eqref{eq:alpha2inf}, $\alpha^{(2),+}_\infty$ blows up at $\omega$ for which the spectrum of $\mathcal{K}^*_{B,\sharp}$ coincides with the contrast $\lambda(\omega)$. Notice that the eigenvalues $\{\lambda_j\}_{j\in\N}$ depend implicitly on the ratio $\delta/d$. So as the period increases, the frequency at which a plasmonic resonance occurs will be shifted to the right (i.e., the red). This follows from Lemma~\ref{lem:shape}. From equation \eqref{eq:shape}, it is clear that as the period increases, which is equivalent to the particle radius decreasing ($\eta<0$), eigenvalues $\{\lambda_j\}_{j\in\N}$ are larger.

The contrast can be written explicitly in terms of the frequency, using, for instance, the Drude model~\cite{ordal83}, to express the magnetic permeability of the particles as:
\begin{equation*}
\mu_c(\omega)=\mu_0\left(1-\frac{\omega_p^2}{\omega^2+i\omega\mathrm{T}^{-1}}\right),
\end{equation*}
where the positive constants $\omega_p$ and $\mathrm{T}$ are the plasma frequency and the collision frequency or damping factor, respectively. Here, $\mu_0$ is the magnetic permeability in vacuum. In the non-restrictive case where the medium is vacuum, i.e., $\mu_m=\mu_0$, the contrast has the simple expression
\begin{equation*}
\lambda(\omega)=\frac{\omega^2+i\omega\mathrm{T}^{-1}}{\omega_p^2}-\frac{1}{2}.
\end{equation*}
Now, solving $\lambda_j=\lambda(\omega)$ yields $(\Re \omega)^2=\left(\lambda_j+\frac{1}{2}\right)\omega_p^2-\mathrm{T}^{-2}/4$, which tells us that when the eigenvalues $\{\lambda_j\}_{j\in\N}$ are larger, the frequency is larger. As the wavelength is inversely proportional to the frequency, $\Lambda=2\pi c/\omega$, a period increase will shift the absorption peak to smaller wavelengths. This is consistent with the experimental results reported in~\cite{burel17,marzan2006}.

\subsection{Capsule's deformation}
The microcapsule's deformation under mechanical stress is characterised by the Taylor parameter $(D,\theta)$: a deformation index $D:=(L_1-L_2)/(L_1+L_2)$, where $L_1$ and $L_2$ are the major and minor axes of the ellipse and an orientation angle $\theta$ \cite{yuan2015,biesel2016,zheng2016}. In our particular case, the capsule's surface is conserved so $L_1=r^2/L_2$, where $r$ is the disk radius before elongation, and the strain is uni-axial on a film so $\theta=0$.
The perimeter of the ellipse can be approximated by $\mathcal{P}\approx\pi\sqrt{2}\sqrt{L_1^2+L_2^2}$. On the other hand, $\mathcal{P}\approx N d$, where $N$ was the number of nanoparticles. Therefore, by measuring the position of the absorption peak of the microcapsule, one can calculate the inter-particle distance and in turn fully characterise the deformation.

\subsection{Numerical illustration}
We now show numerical computations to further validate our results.
In \cite{burel17}, the capsules are roughly stretched by a factor of three, which corresponds to approximately doubling the inter-particle distance:
\begin{equation*}
\frac{\mathcal{P}'}{\mathcal{P}}=\frac{d'}{d}\approx \frac{\pi\sqrt{2}\sqrt{(3r)^2+(r/3)^2}}{2\pi r}\approx 2.13,
\end{equation*}
where $\mathcal{P}$ and $\mathcal{P}'$ are the perimeters of the circle and the ellipse, respectively.
\begin{figure}
\centering
  \includegraphics[trim={0.5cm 5.5cm 9cm 5.8cm},clip,width=12cm]{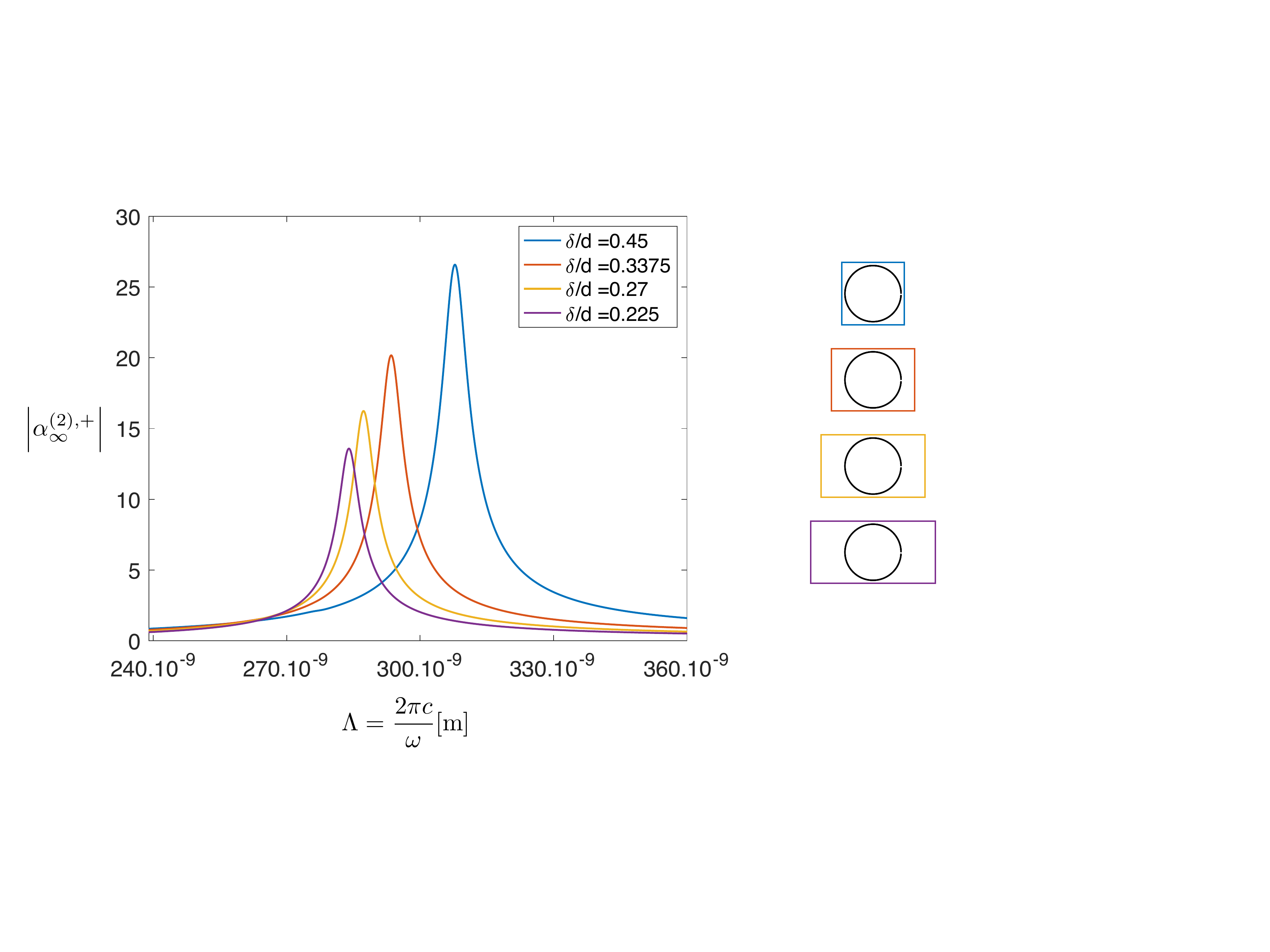}
  \caption{$\left|\alpha^{(2),+}_\infty\right|$ as a function of the wavelength for linearly increasing unit cell sizes, from $d=1$ (blue contour) to $d=2$ (purple contour), with a fixed radius $\delta=0.45$. Water was used for the homogeneous medium ($\varepsilon_m=(1.77)^2\varepsilon_0$) and gold for the nanoparticles. For the plasma frequency and damping factor we used $\mathrm{T}=10^{-14}$s and $\omega_p=2\cdot 10^{15}s^{-1}$. }
  \label{fig:diff_periods}
\end{figure}
Figure~\ref{fig:diff_periods} shows $\left|\alpha^{(2),+}_\infty\right|$ as a function of the wavelength for different periods but for a fixed radius. The larger the distance between the gold nanoparticles, the more red-shifted the plasmon peak is, which is consistent with our theoretical result in Lemma \ref{lem:shape}.

\begin{remark}
As the volume fraction of the nanoparticles increases, the absorption peak broadens and shifts to the red, as reported in \cite{marzan2006}, which explains why our absorption peaks are in the UV range and not in the visible range.
\end{remark}

\section{Conclusion}
\label{sec:con}
The mathematical modelling presented in this article gives a rigorous justification for the results reported in \cite{burel17}, where gold nanoparticles were used as building blocks to design strain sensing microcapsules. Using the spectral properties of the Neumann-Poincar\'e operator we derived an effective transmission condition and investigated the dependency of the effective transmission condition with respect to changes in the nanoparticles spacing.

Although the nanoparticles were modelled as disks, the calculations were conducted for an arbitrary shape, one with a sufficiently smooth boundary, and only the numerical computations shown in Figure~\ref{fig:diff_periods} are specific to circles.
This result could be extended to a two-dimensional array of spherical nanoparticles mounted on a two-dimensional surface.

\appendix

\section{Periodic Green's function}
\begin{definition} Let us define the one-dimensional periodic Green's function in $\mathbb{R}^2$ as the function $G_\sharp :\mathbb{R}^2\rightarrow \mathbb{C}$ satisfying
\begin{equation}\label{eq:D_G_hash}
\Delta G_\sharp (\xi)=\sum_{n\in \mathbb{Z}}\delta_0\left(\xi+\left(\frac{nd}{\delta},0\right)\right).
\end{equation}
\end{definition}

\begin{lemma}
Let $\xi=(\xi_1,\xi_2)$. Then 
\begin{equation}\label{eq:G_hash}
G_\sharp (\xi)=\frac{1}{4\pi}\ln{\left[\sinh^2\left(\frac{\pi\delta}{d} \xi_2\right)+\sin^2\left(\frac{\pi\delta}{d} \xi_1\right)\right]},
\end{equation}
satisfies \eqref{eq:D_G_hash}.
\end{lemma}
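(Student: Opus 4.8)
The plan is to verify the distributional identity \eqref{eq:D_G_hash} directly, by recognizing $G_\sharp$ as the real part of a holomorphic function whose zeros sit precisely at the prescribed lattice points. Setting $a=\pi\delta/d$ and $z=\xi_1+i\xi_2$, I would first record the elementary identity
\[
\sinh^2(a\xi_2)+\sin^2(a\xi_1)=\left|\sin(az)\right|^2,
\]
which follows from $\sin(az)=\sin(a\xi_1)\cosh(a\xi_2)+i\cos(a\xi_1)\sinh(a\xi_2)$ together with $\cosh^2=1+\sinh^2$ and $\sin^2+\cos^2=1$. This lets me rewrite the proposed formula \eqref{eq:G_hash} as
\[
G_\sharp(\xi)=\frac{1}{2\pi}\ln\left|\sin(az)\right|=\frac{1}{2\pi}\,\Re\,\log\sin(az),
\]
the factor $\tfrac14$ combining with the square to give $\tfrac12$.

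Since the real part of a holomorphic function is harmonic, the second display shows that $G_\sharp$ is harmonic away from the zeros of $z\mapsto\sin(az)$. These occur exactly when $az\in\pi\Z$, that is at $z=nd/\delta$, i.e.\ $\xi=(nd/\delta,0)$ for $n\in\Z$; this already matches the support of the right-hand side of \eqref{eq:D_G_hash} and confirms the $d/\delta$-periodicity of $G_\sharp$ in $\xi_1$. It remains to compute the strength of the singularity at each of these points. Near $z=nd/\delta$ one has $\sin(az)=(-1)^n a\,(z-nd/\delta)+\mathcal{O}\bigl((z-nd/\delta)^3\bigr)$, so that
\[
G_\sharp(\xi)=\frac{1}{2\pi}\ln\left|\xi-(nd/\delta,0)\right|+h_n(\xi),
\]
with $h_n$ harmonic in a neighbourhood of the lattice point. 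Because $\tfrac{1}{2\pi}\ln|\xi|$ is the fundamental solution of the Laplacian in $\R^2$, whose distributional Laplacian is $\delta_0$, each lattice point contributes exactly one unit Dirac mass, giving $\Delta G_\sharp=\sum_{n\in\Z}\delta_0(\xi-(nd/\delta,0))$, which coincides with \eqref{eq:D_G_hash} since the summation runs over all of $\Z$.

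The step I expect to require the most care is turning the local expansion into a genuine distributional statement. Against a test function $\varphi\in\mathcal{C}^\infty_c(\R^2)$, only finitely many lattice points meet the support of $\varphi$, so the series on the right is locally finite and no convergence issue arises; the pairing $\langle \Delta G_\sharp,\varphi\rangle=\langle G_\sharp,\Delta\varphi\rangle$ must then be evaluated by excising small disks around each singular point, integrating by parts with Green's identity, and letting the radii shrink. The harmonicity of $G_\sharp$ in the punctured region kills the bulk term, while the boundary integrals over the small circles converge to the point evaluations $\varphi(nd/\delta,0)$ precisely because the regular parts $h_n$ are smooth and the logarithmic part reproduces the normalization of the two-dimensional fundamental solution. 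Verifying that these boundary contributions carry weight exactly one, with no spurious factor from the constant $\tfrac{1}{2\pi}\ln|a|$ absorbed harmlessly into $h_n$, is the crux of the argument.
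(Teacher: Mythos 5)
Your argument is correct and complete. The key identity $\sinh^2(a\xi_2)+\sin^2(a\xi_1)=|\sin(az)|^2$ with $a=\pi\delta/d$ and $z=\xi_1+i\xi_2$ is verified exactly as you state, the rewriting $G_\sharp=\tfrac{1}{2\pi}\Re\log\sin(az)$ correctly accounts for the factor $\tfrac14$ versus $\tfrac12$, the zeros of $\sin(az)$ land precisely on the lattice $(nd/\delta,0)$ (and the sign discrepancy with the $\xi+(nd/\delta,0)$ in \eqref{eq:D_G_hash} is indeed immaterial since $n$ ranges over all of $\Z$), and the local factorization $\sin(az)=(-1)^n a(z-nd/\delta)(1+\mathcal{O}((z-nd/\delta)^2))$ gives exactly the normalization $\tfrac{1}{2\pi}\ln|\xi-(nd/\delta,0)|$ of the two-dimensional fundamental solution, with the constant $\tfrac{1}{2\pi}\ln a$ harmlessly absorbed into the harmonic remainder. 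That said, your route differs from the paper's, which offers no computation at all: it simply cites the reference \cite{photonic} for the case $d/\delta=1$ and observes that inserting the scaling factor is straightforward. The cited construction typically proceeds in the opposite direction, building $G_\sharp$ as a (renormalized) sum of shifted logarithmic kernels or via a Fourier representation and then identifying the closed form, which explains where the formula comes from; your verification instead takes the closed form as given and checks the distributional identity directly, which has the advantage of being self-contained and elementary. The only place where you leave work implicit is the final Green's-identity computation on the excised disks, but you correctly identify it as the crux and describe accurately why the boundary terms converge to unit point masses, so nothing essential is missing.
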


\begin{proof}
The proof can be found in \cite{photonic} in the special case $d/\delta=1$. Adding the multiplicative factor is straightforward.
\end{proof}
Let us denote by $G_\sharp (\xi,\zeta):=G_\sharp (\xi-\zeta)$.
\begin{lemma}\label{lem:G_hash}
The following expansions hold for $G_\sharp $ at infinity:
\begin{align*}
& G_\sharp (\xi)=\frac{\delta(\xi_2-\zeta_2)}{2d}-\frac{\ln{2}}{2\pi}+\mathcal{O}(\exp({-\xi_2})) &\mbox{as } \xi_2\rightarrow +\infty, \\
& G_\sharp (\xi)=\frac{-\delta(\xi_2-\zeta_2)}{2d}-\frac{\ln{2}}{2\pi}+\mathcal{O}(\exp({\xi_2})) &\mbox{as } \xi_2\rightarrow -\infty.
\end{align*}
\end{lemma}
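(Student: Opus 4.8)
The plan is to start from the closed-form expression for the periodic Green's function established in the preceding lemma, namely
\begin{equation*}
G_\sharp(\xi) = \frac{1}{4\pi}\ln\left[\sinh^2\left(\frac{\pi\delta}{d}\xi_2\right) + \sin^2\left(\frac{\pi\delta}{d}\xi_1\right)\right],
\end{equation*}
and simply extract its asymptotic behavior as $\xi_2 \to \pm\infty$. The key observation is that, for large $|\xi_2|$, the $\sinh^2$ term dominates the bounded oscillatory $\sin^2$ term, so the logarithm is governed entirely by $\sinh^2(\pi\delta\xi_2/d)$ up to an exponentially small relative correction.

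First I would factor the argument of the logarithm as $\sinh^2(\pi\delta\xi_2/d)\bigl[1 + \sin^2(\pi\delta\xi_1/d)/\sinh^2(\pi\delta\xi_2/d)\bigr]$, valid once $\xi_2$ is bounded away from zero. Taking the logarithm splits this into $\ln\sinh^2(\pi\delta\xi_2/d)$ plus $\ln[1 + O(\exp(-2\pi\delta\xi_2/d))]$, and the second piece is itself $O(\exp(-\xi_2))$ (after absorbing the positive constant $2\pi\delta/d$, which is of order one since $d$ and $\delta$ are comparable). Next I would expand $\sinh(t) = \tfrac{1}{2}e^{t}(1 - e^{-2t})$ for $t = \pi\delta\xi_2/d \to +\infty$, so that
\begin{equation*}
\ln\sinh^2\left(\frac{\pi\delta}{d}\xi_2\right) = 2\ln\sinh\left(\frac{\pi\delta}{d}\xi_2\right) = 2\left(\frac{\pi\delta}{d}\xi_2 - \ln 2\right) + O(\exp(-\xi_2)).
\end{equation*}
Multiplying by the prefactor $1/(4\pi)$ then yields $\frac{\delta\xi_2}{2d} - \frac{\ln 2}{2\pi} + O(\exp(-\xi_2))$, which is the claimed expansion after replacing $\xi_2$ by $\xi_2 - \zeta_2$ to pass from $G_\sharp(\xi)$ to $G_\sharp(\xi,\zeta) = G_\sharp(\xi-\zeta)$. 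The case $\xi_2 \to -\infty$ follows identically, using $\sinh(t) = -\tfrac{1}{2}e^{-t}(1 - e^{2t})$ for $t \to -\infty$, which flips the sign of the linear term and produces the stated $O(\exp(\xi_2))$ remainder.

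There is no serious obstacle here; the result is a direct computation. The only point requiring a little care is bookkeeping the error terms: one must check that the $\sin^2$ contribution and the subleading $\sinh$ correction both collapse into a single $O(\exp(-\xi_2))$ (respectively $O(\exp(\xi_2))$) after the order-one constant $\pi\delta/d$ is absorbed into the exponential rate, rather than writing a more precise but cluttered $O(\exp(-2\pi\delta\xi_2/d))$. Since the statement tolerates the coarser $O(\exp(-\xi_2))$ form, this absorption is legitimate and the proof reduces to the elementary large-argument expansion of $\ln\sinh$.
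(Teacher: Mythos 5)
Your proposal is correct and follows essentially the same route as the paper: both start from the closed-form expression for $G_\sharp$, discard the bounded $\sin^2$ term as exponentially negligible relative to $\sinh^2$ for large $|\xi_2|$, and extract the linear term and the constant $-\ln 2/(2\pi)$ from the large-argument expansion of $\ln\sinh$. Your bookkeeping of the remainders (collapsing the $\sin^2$ contribution and the subleading $\sinh$ correction into a single $\mathcal{O}(\exp(-\xi_2))$ using that $\pi\delta/d$ is of order one) is, if anything, slightly more careful than the paper's intermediate error terms.
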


\begin{proof}
As $\xi_2\rightarrow +\infty$, we have
\begin{align*}
G_\sharp (\xi,\zeta)&=\frac{1}{4\pi}\ln{\left[\sinh^2\left(\frac{\pi\delta}{d}(\xi_2-\zeta_2)\right)+\sin^2\left(\frac{\pi\delta}{d} (\xi_1-\zeta_1)\right)\right]},\\
&=\frac{1}{2\pi}\ln{\left[\sinh\left(\frac{\pi\delta}{d} |\xi_2-\zeta_2|\right)\right]}+\mathcal{O}\left(1+\frac{1}{\sinh^2(\xi_2)}\right),\\
&=\frac{1}{2\pi}\ln{\left[\exp\left(\frac{\pi\delta}{d}\left|\xi_2-\zeta_2\right|\right)-\exp\left(-\frac{\pi\delta}{d}\left|\xi_2-\zeta_2\right|\right)\right]}-\frac{\ln{2}}{2\pi}+\mathcal{O}\left(\ln\left(1+\exp({-2\xi_2})\right)\right),\\
&=\frac{1}{2\pi}\ln{\left[\exp\left(\frac{\pi\delta}{d}\left|\xi_2-\zeta_2\right|\right)\right]}-\frac{\ln{2}}{2\pi}+\mathcal{O}\left(\exp({-\xi_2})\right),\\
&=\frac{\delta(\xi_2-\zeta_2)}{2d}-\frac{\ln{2}}{2\pi}+\mathcal{O}(\exp({-\xi_2})).
\end{align*}
The proof is similar for $\xi_2\rightarrow -\infty$.
\end{proof}

\section{Periodic boundary integral operators}
\label{app:SLP}
In what follows, let $H^s(\partial B)$ be the usual Sobolev space of order $s$ on $\partial B$ and let $H_0$ denote the zero-mean subspace of $H$.
\begin{definition} \label{defNP}
We define the one-dimensional periodic single- and double-layer potentials and the one-dimensional periodic Neumann-Poincar\'e operator, respectively, for $B \Subset \left]-\frac{d}{2\delta},\frac{d}{2\delta}\right[ \times \mathbb{R}$ of class $\mathcal{C}^{1,\alpha}$ for some $0<\alpha<1$,
\begin{align*}
\mathcal{S}_{B,\sharp }:H^{-\frac{1}{2}}(\partial B) &\longrightarrow H_{\text{loc}}^1(\mathbb{R}^2),H^{\frac{1}{2}}(\partial B)\\
\phi &\longmapsto \mathcal{S}_{B,\sharp }[\phi](x)=\int_{\partial B} G_\sharp (x,y)\phi(y)\mathrm{d}\sigma(y), \quad x\in\mathbb{R}^2,~x\in \partial B; \\
\nm
\mathcal{D}_{B,\sharp }:H^{\frac{1}{2}}(\partial B) &\longrightarrow H_{\text{loc}}^1(\mathbb{R}^2), H^{\frac{1}{2}}(\partial B)\\
\phi &\longmapsto \mathcal{D}_{B,\sharp }[\phi](x)=\int_{\partial B} \frac{\partial G_\sharp (x,y)}{\partial \nu(y)} \phi(y)\mathrm{d}\sigma(y), \quad x\in\mathbb{R}^2 \setminus \partial B,~x \in \partial B; \\
\nm
\mathcal{K}^*_{B,\sharp }:H^{-\frac{1}{2}}(\partial B) &\longrightarrow H^{-\frac{1}{2}}(\partial B)\\
\phi &\longmapsto \mathcal{K}^*_{B,\sharp }[\phi](x)=\int_{\partial B} \frac{\partial G_\sharp (x,y)}{\partial \nu(x)}\phi(y)\mathrm{d}\sigma(y), \quad x\in \partial B.
\end{align*}
\end{definition}

\begin{lemma}\label{lem:symmetrization} We recall the following classical results \cite{photonic}.
\begin{enumerate}
\item[(i)] For any $\phi \in H^{-\frac{1}{2}}(\partial B)$, $\mathcal{S}_{B,\sharp }$ is harmonic in $B$ and in $\left]-\frac{d}{2\delta},\frac{d}{2\delta}\right[\times \R \setminus \overline{B}$.
\item[(ii)] The following Plemelj's symmetrization principle identity (also
known as Calder\'on's identity) holds:
\begin{align*}
\mathcal{K}_{B,\sharp } \mathcal{S}_{B,\sharp} = \mathcal{S}_{B,\sharp } \mathcal{K}_{B,\sharp }^* \qquad \mbox{on~} H^{-\frac{1}{2}}(\partial B),
\end{align*}
where $\mathcal{K}_{B,\sharp }$ is the $L^2$-adjoint of $\mathcal{K}_{B,\sharp }^*$.
\item[(iii)] The operator $\mathcal{K}_{B,\sharp }^*:H^{-\frac{1}{2}}_0(\partial B) \rightarrow H^{-\frac{1}{2}}_0(\partial B)$ is self-adjoint in the Hilbert space $\mathcal{H}_0^*(\partial B)$ which is $H^{-\frac{1}{2}}_0(\partial B)$ equipped with the following inner product:
\begin{equation*}
\left\langle u, v\right\rangle_{\mathcal{H}^*_0(\partial B)} = -\left\langle u, \mathcal{S}_{B,\sharp }[v]\right\rangle_{-\frac{1}{2},\frac{1}{2}},
\end{equation*}
with $-\left\langle \cdot, \cdot \right\rangle_{-\frac{1}{2},\frac{1}{2}}$ being the duality pairing between $H^{-\frac{1}{2}}_0(\partial B)$ and $H^{\frac{1}{2}}_0(\partial B)$, which makes $\mathcal{H}_0^*(\partial B)$ equivalent to $H^{-\frac{1}{2}}_0(\partial B)$.
\item[(iv)] If $\partial B$ is of class $\mathcal{C}^{1,\alpha}$, for some $\alpha>0$, then $\mathcal{K}_{B,\sharp }^*$ is compact. Let $(\lambda_j,\phi_j)_{j\in \N}$, be the eigenvalues and normalized eigenfunctions of $\mathcal{K}_{B,\sharp }^*$ in $\mathcal{H}^*(\partial B)$. Then $\lambda_j \in ]-1/2, 1/2]$, $\lambda_0=1/2$ and $\lambda_j \rightarrow 0$ as $j\rightarrow \infty$.
\item[(v)] Since $\mathcal{K}_{B,\sharp }[1]=1/2$, it holds that
\begin{equation*}
\int_{\partial B} \phi_j \, \mathrm{d}\sigma =0 \qquad  \mbox{for } j\neq 0.
\end{equation*}
\item[(vi)] The following trace formulae hold for $\phi \in H^{-\frac{1}{2}}(\partial B)$:
\begin{eqnarray*}
\left. \mathcal{S}_{B,\sharp }[\phi] \right|_+&=& \left. \mathcal{S}_{B,\sharp }[\phi] \right|_- ,\\
\nm
\left. \mathcal{D}_{B,\sharp }[\phi] \right|_{\pm}&=& \left(\mp\frac{1}{2}I+\mathcal{K}_{B,\sharp }\right)[\phi],\\
\nm
\left.\frac{\partial \mathcal{S}_{B,\sharp }[\phi]}{\partial \nu}\right|_{\pm}&=&\left(\pm\frac{1}{2}I+\mathcal{K}^{*}_{B,\sharp }\right)[\phi].
\end{eqnarray*}
\item[(vii)] The following representation formula holds:
\begin{align*}
\mathcal{K}_{B,\sharp }^*[\phi]=\sum_{l=0}^\infty \lambda_j \left\langle \phi,\phi_j\right\rangle_{\mathcal{H}^*_0(\partial {B})} \phi_j, \qquad \forall\phi \in \mathcal{H}^*_0(\partial B).
\end{align*}
\end{enumerate}
\end{lemma}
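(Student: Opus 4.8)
The plan is to deduce every item from a single structural observation. By the explicit formula \eqref{eq:G_hash}, the periodic Green's function splits near the diagonal as
\[
G_\sharp(\xi,\zeta) = \frac{1}{2\pi}\ln|\xi-\zeta| + R(\xi,\zeta),
\]
where $R$ is harmonic (indeed real-analytic) in each variable in a neighbourhood of the diagonal inside the fundamental cell. Concretely, using $\sinh^2(\tfrac{\pi\delta}{d}\xi_2)+\sin^2(\tfrac{\pi\delta}{d}\xi_1)=(\tfrac{\pi\delta}{d})^2|\xi|^2(1+\mathcal{O}(|\xi|^2))$ as $\xi\to 0$ shows that $G_\sharp-\tfrac{1}{2\pi}\ln|\cdot|$ is bounded across the diagonal, and since both terms are harmonic away from the lattice by \eqref{eq:D_G_hash}, the removable-singularity theorem makes $R$ harmonic there. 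Once this is in hand, all local mapping and jump properties are inherited verbatim from the classical free-space single- and double-layer calculus, and only the genuinely global features (coercivity, the constant eigenfunction) require the periodic structure.

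First I would record (i): away from the lattice points $G_\sharp$ is harmonic by \eqref{eq:D_G_hash}, and since $B\Subset\,]-\tfrac{d}{2\delta},\tfrac{d}{2\delta}[\,\times\R$ avoids the nonzero translates, differentiating under the integral gives $\Delta_x\mathcal{S}_{B,\sharp}[\phi](x)=0$ for $x\notin\partial B$. The trace formulae (vi) then follow from the splitting: the $R$-contribution to $\mathcal{S}_{B,\sharp}$ and $\mathcal{D}_{B,\sharp}$ has a smooth kernel and hence continuous traces, so the jumps coincide exactly with the free-space Plemelj relations, with the same factors $\pm\tfrac12$. The compactness in (iv) is likewise local: for $\partial B$ of class $\mathcal{C}^{1,\alpha}$ the kernel $\partial_{\nu(x)}\tfrac{1}{2\pi}\ln|x-y|$ is weakly singular with a gain of regularity $\alpha$, and the smooth remainder only improves this, so $\mathcal{K}^*_{B,\sharp}$ is compact on $H^{-\frac12}(\partial B)$.

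For the algebraic and spectral statements I would proceed as follows. Since $G_\sharp$ is even, hence symmetric, combining this symmetry with the jump relations (vi) yields Calder\'on's identity (ii) as in the free-space derivation. To obtain (iii) I would first verify that $-\langle\cdot,\mathcal{S}_{B,\sharp}[\cdot]\rangle_{-\frac12,\frac12}$ is a bounded, coercive, positive-definite form on $H^{-\frac12}_0(\partial B)$; self-adjointness of $\mathcal{K}^*_{B,\sharp}$ in the resulting space $\mathcal{H}^*_0(\partial B)$ is then the one-line computation $\langle\mathcal{K}^*_{B,\sharp}u,v\rangle_{\mathcal{H}^*_0}=-\langle\mathcal{K}^*_{B,\sharp}u,\mathcal{S}_{B,\sharp}v\rangle=-\langle u,\mathcal{K}_{B,\sharp}\mathcal{S}_{B,\sharp}v\rangle=-\langle u,\mathcal{S}_{B,\sharp}\mathcal{K}^*_{B,\sharp}v\rangle=\langle u,\mathcal{K}^*_{B,\sharp}v\rangle_{\mathcal{H}^*_0}$, using (ii). The spectral theorem for compact self-adjoint operators gives real eigenvalues $\lambda_j\to0$ and the representation (vii); the bound $\lambda_j\in\,]-\tfrac12,\tfrac12]$ comes from (vi), while $\lambda_0=\tfrac12$ is realised by the equilibrium density $\phi_0$ characterised by $\partial_\nu\mathcal{S}_{B,\sharp}[\phi_0]|_-=0$. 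Finally (v) follows from $\mathcal{K}_{B,\sharp}[1]=\tfrac12$: integrating the eigenvalue relation, $(\lambda_j-\tfrac12)\int_{\partial B}\phi_j\,\dd\sigma=\int_{\partial B}(\mathcal{K}^*_{B,\sharp}-\tfrac12)[\phi_j]\,\dd\sigma=\int_{\partial B}\phi_j(\mathcal{K}_{B,\sharp}-\tfrac12)[1]\,\dd\sigma=0$, so $\int\phi_j\,\dd\sigma=0$ whenever $\lambda_j\neq\tfrac12$.

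The main obstacle I anticipate is the coercivity needed for (iii). In the free-space two-dimensional case the single-layer form is already not sign-definite on all of $H^{-\frac12}$ (the logarithmic-capacity issue), and here the periodic kernel grows linearly in $\xi_2$ by Lemma~\ref{lem:G_hash}; some care is therefore required to show that, after restricting to the zero-mean subspace $H^{-\frac12}_0(\partial B)$, the form $-\langle\cdot,\mathcal{S}_{B,\sharp}[\cdot]\rangle$ is genuinely positive-definite and equivalent to the $H^{-\frac12}$-norm. I expect this to hinge on combining the local norm equivalence (from the splitting above) with the growth structure of Lemma~\ref{lem:G_hash} evaluated against densities of vanishing mean; everything else is a transcription of the classical arguments in \cite{photonic}.
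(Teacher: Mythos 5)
The paper does not actually prove this lemma --- it is recalled from \cite{photonic} without argument --- so there is no internal proof to compare against; your reconstruction follows the same standard route as that reference: split $G_\sharp$ into the free-space logarithmic kernel plus a remainder that is harmonic across the diagonal, inherit the local statements (i), (iv), (vi) from the classical layer-potential calculus, and obtain (ii), (iii), (v), (vii) by the usual algebra. The outline is sound, and the one step you flag as delicate --- positive-definiteness of $u\mapsto -\left\langle u,\mathcal{S}_{B,\sharp}[u]\right\rangle_{-\frac{1}{2},\frac{1}{2}}$ on $H^{-\frac{1}{2}}_0(\partial B)$ --- is indeed the only place where the periodic structure genuinely enters; here is how it closes. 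For $\int_{\partial B}\phi\,\dd\sigma=0$, the expansions of Lemma~\ref{lem:G_hash} show that $u:=\mathcal{S}_{B,\sharp}[\phi]$ tends to constants as $\xi_2\to\pm\infty$ with exponentially decaying gradient (the linear growth in $\xi_2$ is multiplied by $\int_{\partial B}\phi\,\dd\sigma=0$). Writing $\phi=\partial_\nu u|_+-\partial_\nu u|_-$ by (vi) and applying Green's identity in $B$ and in the truncated periodic cell $\left]-\frac{d}{2\delta},\frac{d}{2\delta}\right[\times\left]-L,L\right[\setminus\overline{B}$, the lateral boundary terms cancel by periodicity of $G_\sharp$ in $\xi_1$ and the horizontal ones vanish as $L\to\infty$, giving
\begin{equation*}
-\left\langle \phi,\mathcal{S}_{B,\sharp}[\phi]\right\rangle_{-\frac{1}{2},\frac{1}{2}}
=\int_{B}\left|\nabla u\right|^2\dd\xi+\int_{\left]-\frac{d}{2\delta},\frac{d}{2\delta}\right[\times\R\,\setminus\overline{B}}\left|\nabla u\right|^2\dd\xi\;\geq 0,
\end{equation*}
with equality forcing $u$ to be constant and hence $\phi=0$ via the jump of the normal derivative; this is exactly why the logarithmic-capacity pathology of the free-space two-dimensional kernel does not occur on the zero-mean subspace. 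Norm equivalence with $H^{-\frac{1}{2}}_0(\partial B)$ then follows because $\mathcal{S}_{B,\sharp}$ differs from the free-space single-layer potential by an operator with smooth kernel. Note also that the same energy identity, applied to an eigenfunction through the jump relations (vi), is what produces the bound $\lambda_j\in\left]-\frac{1}{2},\frac{1}{2}\right]$ in (iv), which you assert but do not derive; these are minor completions rather than gaps in the approach.
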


The following result on the shape derivative of the eigenvalues of $\mathcal{K}_{B,\sharp }^*$ follows from \cite{AKL}.
\begin{lemma} \label{lem:shape} Let $B_\eta =\{ x + \eta \nu(x), x\in \partial B\}$ for $|\eta|$ small enough. Suppose that $\lambda_j(B)$ is simple. Then 
\begin{equation} \label{eq:shape}
\lambda_j(B_\eta)= \lambda_j(B) - \eta\left(\lambda_j - \frac{1}{2}\right) \left(\lambda_j + \frac{1}{2}\right)\int_{\partial B}|\phi_j|^2 \, \mathrm{d}\sigma + \eta  \int_{\partial B}  \left| \frac{\partial \mathcal{S}_{D,\sharp }}{\partial T}[\phi_j] \right|^2 \, \mathrm{d}\sigma  + \mathcal{O}\left(\eta^2\right).
\end{equation}
\end{lemma}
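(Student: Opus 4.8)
The plan is to read \eqref{eq:shape} as the first-order term of an analytic perturbation of a simple eigenvalue of the self-adjoint operator $\mathcal{K}_{B,\sharp}^*$, and to match that perturbation with the shape-derivative computation of \cite{AKL}. First I would freeze the reference boundary $\partial B$ and pull the perturbed operator $\mathcal{K}_{B_\eta,\sharp}^*$ back to $\partial B$ through the diffeomorphism $F_\eta(x)=x+\eta\nu(x)$. Writing $\widehat{\mathcal{K}}_\eta^{*}$ for the resulting family acting on the fixed space $H^{-1/2}_0(\partial B)$, the pair $(\lambda_j(B_\eta),\phi_j^\eta)$ becomes an eigenpair of $\widehat{\mathcal{K}}_\eta^{*}$. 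Since $\mathcal{K}_{B,\sharp}^*$ is self-adjoint in $\mathcal{H}^*_0(\partial B)$ (Lemma \ref{lem:symmetrization}(iii)) and $\lambda_j(B)$ is simple, differentiating $\widehat{\mathcal{K}}_\eta^{*}[\phi_j^\eta]=\lambda_j(B_\eta)\phi_j^\eta$ at $\eta=0$ and pairing with $\phi_j$ in $\langle\cdot,\cdot\rangle_{\mathcal{H}^*_0(\partial B)}$ gives
\begin{equation*}
\frac{d}{d\eta}\lambda_j(B_\eta)\Big|_{\eta=0}=\left\langle \dot{\mathcal{K}}^{*}[\phi_j],\phi_j\right\rangle_{\mathcal{H}^*_0(\partial B)},
\end{equation*}
where $\dot{\mathcal{K}}^{*}$ is the $\eta$-derivative of $\widehat{\mathcal{K}}_\eta^{*}$; the contributions of the eigenfunction derivative $\dot\phi_j$ cancel by self-adjointness, and the $\eta$-dependence of the inner product does not enter, the eigenvalue equation being inner-product free.

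The core of the argument is the evaluation of $\dot{\mathcal{K}}^{*}$. Here I would expand to first order in $\eta$ the three $\eta$-dependent ingredients of the pulled-back kernel: the argument $F_\eta(x)-F_\eta(y)=(x-y)+\eta(\nu(x)-\nu(y))$ of $G_\sharp$, the outward normal $\widetilde\nu$ to $\partial B_\eta$, and the surface Jacobian $\mathrm{d}\sigma_\eta=(1+\eta\kappa)\,\mathrm{d}\sigma+\mathcal{O}(\eta^2)$, with $\kappa$ the curvature of $\partial B$. Since $G_\sharp$ differs from the free-space logarithmic kernel only by a smooth harmonic remainder — compare \eqref{eq:G_hash} — the singular part of this expansion is precisely the one analysed in \cite{AKL}, while the remainder contributes a regular kernel; hence the shape derivative of the periodic operator takes the same form as in the free-space setting. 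Substituting the resulting bilinear form into the display above and integrating by parts on $\partial B$ to resolve $\nabla\mathcal{S}_{B,\sharp}[\phi_j]$ into its tangential and normal parts produces a boundary integral whose integrand is built from $\partial\mathcal{S}_{B,\sharp}[\phi_j]/\partial T$ and from $\partial\mathcal{S}_{B,\sharp}[\phi_j]/\partial\nu|_\pm$.

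Finally I would insert the trace relations of Lemma \ref{lem:symmetrization}(vi): for the eigenfunction one has $\partial\mathcal{S}_{B,\sharp}[\phi_j]/\partial\nu|_\pm=(\pm\frac{1}{2}+\lambda_j)\phi_j$, so the normal contribution collapses to a multiple of $(\lambda_j-\frac{1}{2})(\lambda_j+\frac{1}{2})|\phi_j|^2$, while the tangential contribution is already $|\partial\mathcal{S}_{B,\sharp}[\phi_j]/\partial T|^2$. Collecting the two pieces, with the sign supplied by the orientation of the perturbation and the Jacobian factor, reproduces exactly the two integrals of \eqref{eq:shape}.

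The main obstacle is the middle step: rigorously computing $\dot{\mathcal{K}}^{*}$ and, above all, showing that the material derivative $\dot\phi_j$ genuinely drops out. This is where the self-adjointness of $\mathcal{K}_{B,\sharp}^*$ together with the Calder\'on--Plemelj identity of Lemma \ref{lem:symmetrization}(ii) must be used carefully, and where one has to account for the fact that the pulled-back single-layer potential — hence the $\mathcal{H}^*_0(\partial B)$ inner product itself — also varies with $\eta$; the reduction above shows this last variation is immaterial for the first variation of $\lambda_j$, but justifying the differentiability of $\eta\mapsto(\lambda_j(B_\eta),\phi_j^\eta)$ and the interchange of limits rests on the analytic-perturbation framework of \cite{AKL} and on the $\mathcal{C}^{1,\alpha}$ regularity of $\partial B$, which secures the compactness of $\mathcal{K}_{B,\sharp}^*$ (Lemma \ref{lem:symmetrization}(iv)).
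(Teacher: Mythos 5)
Your proposal is correct and follows essentially the same route as the paper's proof: both invoke the first-order expansion $\mathcal{K}_{B_\eta,\sharp}^*=\mathcal{K}_{B,\sharp}^*+\eta\bigl[\partial\mathcal{D}_{B,\sharp}/\partial\nu-\partial^2\mathcal{S}_{B,\sharp}/\partial T^2\bigr]+\mathcal{O}(\eta^2)$ from \cite{AKL}, apply standard perturbation theory for a simple eigenvalue of the self-adjoint operator, integrate by parts against $\mathcal{S}_{B,\sharp}[\phi_j]$, and finish with the trace relations of Lemma \ref{lem:symmetrization}(vi). The only difference is that you spell out the pullback construction, the Hellmann--Feynman cancellation of $\dot\phi_j$, and the harmlessness of the $\eta$-dependence of the $\mathcal{H}^*_0$ inner product, details the paper compresses into a citation and the phrase ``by a standard perturbation argument.''
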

\begin{proof} Following \cite[p.54]{AKL}, we have
$$
\mathcal{K}_{B_\eta,\sharp }^* = \mathcal{K}_{B,\sharp }^*+ \eta \left[ \frac{\partial \mathcal{D}_{B,\sharp }} {\partial \nu} - \frac{\partial^2 \mathcal{S}_{B,\sharp }}{\partial T^2} \right] + \mathcal{O}\left(\eta^2\right),
$$
where $\partial \cdot /\partial T$ denotes the tangential derivative. Therefore, since $\lambda_j$ is assumed to be simple, 
$$ \begin{array}{lll}
\lambda_j(B_\eta) &=& \ds \lambda_j(B) + \eta \left\langle \frac{\partial \mathcal{D}_{B,\sharp }[\phi_j]} {\partial \nu} - \frac{\partial^2 \mathcal{S}_{B,\sharp }[\phi_j]}{\partial T^2},\phi_j  \right\rangle_{\mathcal{H}^*_0(\partial B)} + \mathcal{O}\left(\eta^2\right)\\
\nm
&=& \ds \lambda_j(B) - \eta \int_{\partial B} \left[ \frac{\partial \mathcal{D}_{B,\sharp }[\phi_j]} {\partial \nu} - \frac{\partial^2 \mathcal{S}_{B,\sharp }[\phi_j]}{\partial T^2}  \right]  \mathcal{S}_{B,\sharp }[\phi_j]\, \mathrm{d}\sigma + \mathcal{O}\left(\eta^2\right)\\
\nm
&=&\ds \lambda_j(B) + \eta  \int_{\partial B}  \mathcal{D}_{B,\sharp }[\phi_j]  \frac{\partial \mathcal{S}_{B,\sharp }[\phi_j]}{\partial \nu } \, \mathrm{d}\sigma  -  \eta  \int_{\partial B}  \left| \frac{\partial \mathcal{S}_{B,\sharp }}{\partial T}[\phi_j] \right|^2 \, \mathrm{d}\sigma + \mathcal{O}\left(\eta^2\right),
\end{array}
$$
by a standard perturbation argument. Hence, using the jump relations in Lemma \ref{lem:symmetrization} (vi), it follows that 
$$
 \lambda_j(B_\eta) =  \ds \lambda_j(B) + \eta \left(\lambda_j - \frac{1}{2}\right) \left(\lambda_j + \frac{1}{2}\right) \int_{\partial B}|\phi_j|^2 \, \mathrm{d}\sigma - \eta  \int_{\partial B}  \left| \frac{\partial \mathcal{S}_{D,\sharp }}{\partial T}[\phi_j] \right|^2 \, \mathrm{d}\sigma  +\mathcal{O}\left(\eta^2\right).$$

\end{proof}

\bibliographystyle{abbrv}
\bibliography{/Users/avanel/Dropbox/references}

\end{document}